\def\Pr{\mathop{\rm Pr}}
\def\B{{\mathcal B}}
\def\P{{\mathcal P}}
\def\sPr{{\mathsf{Pr}}}
\def\sX{{\mathds X}}
\def\sU{{\mathds U}}
\def\sZ{{\mathds Z}}
\def\sU{{\mathds U}}
\theoremstyle{remark}
\newtheorem{lem}{Lemma}
\newtheorem{thm}{Theorem}
\newtheorem{assumption}{Assumption}
\theoremstyle{definition}
\newtheorem{mydef}{Definition}
\theoremstyle{remark}
\newtheorem*{remark}{Remark}
\newcommand{\R}{\mathds{R}}
\newcommand{\Zplus}{\mathds{Z}_+}
\newcommand{\N}{\mathds{N}}
\newcommand{\dd}{\mathrm{d}}
\newcommand{\sy}[1]{{\color{black} #1}}
\newcommand{\adk}[1]{{\color{black} #1}}
\newcommand{\ns}[1]{{\color{black} #1}}
\journal{Systems and Control Letters}
\begin{document}
\begin{frontmatter}
\sloppy
\title{Weak Feller Property of Non-linear Filters\tnoteref{title}}
 \tnotetext[title]{This research was supported in part by
the Natural Sciences and Engineering Research Council (NSERC) of Canada.}

 \tnotetext[title]{Some of the results in this paper are to be presented at the 2019 IEEE Conference on Decision and Control.}

\author[adk,ns]{Ali Devran Kara, Naci Saldi and Serdar Y\"uksel}

\address[adk]{Ali Devran Kara and  Serdar Y\"uksel are with the Department of Mathematics and Statistics,
     Queen's University, Kingston, ON, Canada,
     Emails: \{16adk,yuksel\}@mast.queensu.ca.}
\address[ns]{Naci Saldi is with the Department of Natural and Mathematical Sciences, Ozyegin University, Cekmekoy, Istanbul, Turkey,
Email: naci.saldi@ozyegin.edu.tr}


\begin{abstract}
Weak Feller property of controlled and control-free Markov chains lead to many desirable properties. In control-free setups this leads to the existence of invariant probability measures for compact spaces and applicability of numerical approximation methods. For controlled setups, this leads to existence and approximation results for optimal control policies. We know from stochastic control theory that partially observed systems can be converted to fully observed systems by replacing the original state space with a probability measure-valued state space, with the corresponding kernel acting on probability measures known as the non-linear filter process. Establishing sufficient conditions for the weak Feller property for such processes is a significant problem, studied under various assumptions and setups in the literature. \sy{ In this paper, we prove the weak Feller property of the non-linear filter process (i) first under weak continuity of the transition probability of controlled Markov chain and total variation continuity of its observation channel, and then, (ii) under total variation continuity of the transition probability of controlled Markov chain. The former result (i) has first appeared in Feinberg et. al. [\textit{Math. Oper. Res.} \textbf{41}(2) (2016) 656-681]. Here,  we present a concise and easy to follow alternative proof for this existing result. The latter result (ii) establishes weak Feller property of non-linear filter process under conditions which have not been previously reported in the literature.}
\end{abstract}
\begin{keyword}
Non-linear filtering, Partially observed stochastic control
\end{keyword}
\end{frontmatter}

\section{Introduction}\label{section:intro}

\subsection{Preliminaries}

We start with the probabilistic setup of the partially observed Markov processes. Let $\mathds{X} \subset \mathds{R}^n$ be a Borel set in which a control-free or controlled Markov process $\{X_t,\, t \in \Zplus\}$ takes its value. Here and throughout the paper, $\Zplus$ denotes the set of non-negative integers and $\mathds{N}$ denotes the set of positive integers. Let $\mathds{U}$, the action space, be a Borel subset of some Euclidean space $\mathds{R}^p$. Let $\mathds{Y} \subset \mathds{R}^m$ be a Borel set, and let an observation channel $Q$ be defined as a stochastic kernel (regular conditional probability) from  $\mathds{X} \times \mathds{U}$ to $\mathds{Y}$ such that $Q(\,\cdot\,|x,u)$ is a probability measure on the Borel
$\sigma$-algebra ${\cal B}(\mathds{Y})$ of $\mathds{Y}$ for every $(x,u) \in \mathds{X} \times \mathds{U}$ and $Q(A|\,\cdot\,): \mathds{X} \times \mathds{U} \to [0,1]$ is a Borel measurable function for every $A \in {\cal B}(\mathds{Y})$. Let a decision maker (DM) be located at the output of an observation channel $Q$, with inputs $(X_t,U_{t-1})$ and outputs $Y_t$. An {\em admissible policy} $\gamma$ is a sequence of control functions $\{\gamma_t,\, t\in \Zplus\}$ such that $\gamma_t$ is measurable with respect to the $\sigma$-algebra generated by the information variables
\[
I_t=\{Y_{[0,t]},U_{[0,t-1]}\}, \quad t \in \mathds{N}, \quad
  \quad I_0=\{Y_0\},
\]
where
\begin{equation}
\label{eq_control}
U_t=\gamma_t(I_t),\quad t\in \Zplus
\end{equation}
are the $\mathds{U}$-valued control
actions and
\[Y_{[0,t]} = \{Y_s,\, 0 \leq s \leq t \}, \quad U_{[0,t-1]} =
  \{U_s, \, 0 \leq s \leq t-1 \}.\]
We define $\Gamma$ to be the set of all such admissible policies.

The joint distribution of the state, control, and observation processes is determined by (\ref{eq_control}) and the following system dynamics:
\[  \Pr\bigl( (X_0,Y_0)\in B \bigr) =  \int_B Q_0(dy_0|x_0) P_0(dx_0), B\in \mathcal{B}(\mathds{X}\times\mathds{Y}), \]
where $P_0$ is the prior distribution of the initial state $X_0$ and $Q_0$ is the prior control-free observation channel, and for $t\in \mathds{N}$
\begin{eqnarray*}
\label{eq_evol}
\Pr\biggl( (X_t,Y_t)\in B \, \bigg|\, (X,Y,U)_{[0,t-1]}=(x,y,u)_{[0,t-1]} \biggr)
 \\
 = \int_B Q(dy_t|x_t,u_{t-1}) \mathcal{T}(dx_t|x_{t-1}, u_{t-1}),  B\in \mathcal{B}(\mathds{X}\times
\mathds{Y}),
\end{eqnarray*}
where $\mathcal{T}(\cdot|x,u)$ is a stochastic kernel from $\mathds{X}\times \mathds{U}$ to $\mathds{X}$. This completes the probabilistic setup of the partially observed models.

\begin{remark}
Note that if $\mathds{U} = \{u\}$ is a singleton, then we recover the control-free setup. Therefore, results established in this paper also hold for control-free setup. 
\end{remark}

\ns{
Often, one is faced with an optimal control problem (or an optimal decision-making problem when control is absent in the transition dynamics). For sake of completeness, we present typical criteria in the following. Let a one-stage cost function $c:\mathds{X} \times \mathds{U}\rightarrow[0,\infty)$, which is a Borel measurable function from $\mathds{X} \times \mathds{U}$ to $[0,\infty)$, be given. Then, we denote by $W(\gamma)$ the cost function of the policy $\gamma \in \Gamma$, which can be, for instance, discounted cost or average cost criteria \cite{HeLa96}. With these definitions, the goal of the control problem is to find an optimal policy $\gamma^*$ that minimizes $W$.
}

\subsection{Markov Property of Filter Processes}

It is known that any POMP can be reduced to a completely observable Markov process \cite{Yus76}, \cite{Rhe74}, whose states are the posterior state distributions or 'beliefs` of the observer; that is, the state at time $t$ is
\begin{align}
Z_t(\,\cdot\,) := \sPr\{X_{t} \in \,\cdot\, | Y_0,\ldots,Y_t, U_0, \ldots, U_{t-1}\} \in \P(\sX). \nonumber
\end{align}
We call this equivalent process the filter process \index{Belief-MDP}. The filter process has state space $\sZ = \P(\sX)$ and action space $\sU$. Note that $\sZ$ is equipped with the Borel $\sigma$-algebra generated by the topology of weak convergence \cite{Bil99}. Under this topology, $\sZ$ is a Borel space \cite{Par67}. The transition probability of the filter process can be constructed as follows.

The joint conditional probability on next state and observation variables given the current control action and the current state of the filter process is given by
\begin{align}\label{r_kernel1}
R(B\times C|u_0,z_0) = \int_{\mathds{X}} \int_B Q(C|x_1,u_0)\mathcal{T}(dx_1|x_0,u_0)z_0(dx_0),
\end{align}
for all $B \in \B(\mathds{X})$ and $C \in \B(\mathds{Y})$. Then, the conditional distribution of the next observation variable given the current state of the filter process and the current control action is given by
\begin{align*}
P(C|u_0,z_0) = \int_{\mathds{X}} \int_{\mathds{X}} Q(C|x_1,u_0)\mathcal{T}(dx_1|x_0,u_0)z_0(dx_0),
\end{align*}
for all $C\in \B(\mathds{Y})$. Using this, we can disintegrate $R$ (see \cite[Proposition 7.27]{bertsekas78}) as follows:
\begin{align}\label{r_kernel2}
&R(B\times C|u_0,z_0) = \int_C F(B|y_1,u_0,z_0) P(dy_1|u_0,z_0) \nonumber \\
&\phantom{xxxxxxxxxxxxxxxxxxxxxxxxx}=\int_C z_1(y_1,u_0,z_0)(B) P(dy_1|u_0,z_0),
\end{align}
where $F$ is a stochastic kernel from $\mathds{Z}\times \mathds{Y}\times\mathds{U}$ to $\mathds{X}$ and the posterior distribution of $x_1$, determined by the kernel $F$, is the state variable $z_1$ of the filter process.
Then, the transition probability $\eta$ of the filter process can be constructed as follows (see also \cite{Her89}). If we define the measurable function $F(z,u,y) := F(\,\cdot\,|y,u,z) = \Pr\{X_{t+1} \in \,\cdot\, | Z_t = z, U_t = u, Y_{t+1} = y\}$ from $\mathds{Z}\times\mathds{U}\times\mathds{Y}$ to $\mathds{Z}$ and use the stochastic kernel $P(\,\cdot\, | z,u) = \Pr\{Y_{t+1} \in \,\cdot\, | Z_t = z, U_t = u\}$ from $\mathds{Z}\times\mathds{U}$ to $\mathds{Y}$, we can write $\eta$ as
\begin{align}
\eta(\,\cdot\,|z,u) = \int_{\mathds{Y}} 1_{\{F(z,u,y) \in \,\cdot\,\}} P(dy|z,u). \label{beliefK}
\end{align}

The one-stage cost function $\tilde{c}:\mathds{Z} \times \mathds{U}\rightarrow[0,\infty)$ of the filter process is given by
\begin{align}
\tilde{c}(z,u) := \int_{\sX} c(x,u) z(dx), \nonumber
\end{align}
which is a Borel measurable function. Hence, the filter process is a completely observable Markov process with the components $(\sZ,\sU,\tilde{c},\eta)$.

For the filter process, the information variables is defined as
\[
\tilde{I}_t=\{Z_{[0,t]},U_{[0,t-1]}\}, \quad t \in \mathds{N}, \quad
  \quad \tilde{I}_0=\{Z_0\}.
\]
It is a standard result that an optimal control policy of the original POMP will use the belief $Z_t$ as a sufficient statistic for optimal policies (see \cite{Yus76}, \cite{Rhe74}). More precisely, the filter process is equivalent to the original POMP  in the sense that for any optimal policy for the filter process, one can construct a policy for the original POMP which is optimal. 


\subsection{Problem Formulation}\label{problem}

Let us review two convergence notions for probability measures, and also, we define the weak Feller property of Markov decision processes. Let $(\mathds{S},d)$ be a separable metric space. A sequence $\{\mu_n,n\in\N\}$ in the set of probability measures $\mathcal{P}(\mathds{S})$ is said to converge to $\mu\in\mathcal{P}(\mathds{S})$ \emph{weakly} if
\begin{align*}
    \int_{\mathds{S}}f(x)\mu_n(dx) \to \int_{\mathds{S}}f(x)\mu(dx)
\end{align*}
for every continuous and bounded $f:\mathds{S} \to \R$. The topology of weak convergence on the set of probability measures on a separable metric space is metrizable. One such metric is the bounded-Lipschitz metric. For any two probability measures $\mu$ and $\nu$, the bounded-Lipschitz metric is defined as:
\begin{align*}
\rho_{BL}(\mu,\nu)=\sup_{\|f\|_{BL}\leq1} \biggr| \int_{\mathds{S}} f(x) \mu(dx) - \int_{\mathds{S}} f(x) \nu(dx) \biggl|
\end{align*}
where $\|f\|_{BL}=\|f\|_\infty+\sup_{x\neq y}\frac{|f(x)-f(y)|}{d(x,y)}$ and $\|f\|_\infty=\sup_{x\in\mathds{S}}|f(x)|$. Another metric that metrizes the weak topology on $\P(\mathds{S})$ is the following:
\begin{align}
\rho(\mu,\nu) = \sum_{m=1}^{\infty} 2^{-(m+1)} \biggr| \int_{\mathds{S}} f_m(x) \mu(dx) - \int_{\mathds{S}} f_m(x) \nu(dx) \biggl|, \nonumber
\end{align}
where $\{f_m\}_{m\geq1}$ is an appropriate sequence of continuous and bounded functions such that $\|f_m\|_{\infty} \leq 1$ for all $m\geq1$ (see \cite[Theorem 6.6, p. 47]{Par67}).

For probability measures $\mu,\nu\in\mathcal{P}(\mathds{S})$, the \emph{total variation} norm is given by
\begin{align*}
    \|\mu-\nu\|_{TV}&=2\sup_{B\in\mathcal{B}(\mathds{S})}|\mu(B)-\nu(B)|=\sup_{f:\|f\|_\infty \leq 1}\left|\int f(x)\mu(\dd x)-\int f(x)\nu(\dd x)\right|,
\end{align*}
where the supremum is taken over all Borel measurable real $f$ such that $\|f\|_\infty \leq 1$. A sequence $\{\mu_n,n\in\N\}$ in $\mathcal{P}(\mathds{S})$ is said to converge in total variation to $\mu\in\mathcal{P}(\mathds{S})$ if $\|\mu_n-\mu\|_{TV}\to 0$.
\begin{mydef}[\textbf{Weak Feller Property}]
We say that a Markov decision process with transition kernel $\mathcal{T}(\cdot|x,u)$ has weak Feller property if $\mathcal{T}$ is weakly continuous in $x$ and $u$; that is, if $(x_n,u_n) \rightarrow (x,u)$, then $\mathcal{T}(\cdot|x_n,u_n) \rightarrow \mathcal{T}(\cdot|x,u)$ weakly.
\end{mydef}

With this definition, we can now state the problem that we are interested in this paper. 
\begin{itemize}
\item[\textbf{(P)}] Under what conditions on the transition kernel and the observation channel of the partially observed model, the filter process has a weak Feller property?
\end{itemize}


\subsection{Significance of the Problem}

In this section, we motivate the operational (in view of engineering applications) and the mathematical importance of the problem in view of existence and invariance properties, approximations and computational results involving non-linear filters and stochastic control, and related applications involving particle filtering.

For finite-horizon problems and a large class of infinite-horizon discounted cost problems, it is a standard result that an optimal control policy will use the filter process as a sufficient statistic for optimal policies (see \cite{Yus76,Rhe74,Blackwell2}). Hence, the partially observed model and the corresponding filter process are equivalent in the sense of cost minimization. Therefore, results developed for the standard controlled Markov process problems (e.g., measurable selection criteria as summarized in \cite[Chapter 3]{HeLa96}) can be applied to the filter processes, and so, to the partially observed models. In controlled Markov processes theory, weak continuity of the transition kernel is an important condition leading to both the existence of optimal control policies for finite-horizon and infinite-horizon discounted cost problems as well as the continuity properties of the value function (see, e.g., \cite[Section 8.5]{HeLa99}).

For partially observed stochastic control problems with the average cost criterion, the conditions of existence of optimal policies stated in the literature are somewhat restrictive, with the most relaxed conditions to date being reported in \cite{borkar2000average, borkar2004further}, to our knowledge. For such average cost stochastic control problems, the weak Feller property can be utilized to obtain a direct method to establish the existence of optimal stationary (and possibly randomized) control policies. Indeed, for such problems, the convex analytic method (\cite{Manne} and \cite{Borkar2}) is a powerful approach to establish the existence of optimal policies. If one can establish the weak Feller property of the filter process, then the continuity and compactness conditions utilized in the convex program of \cite{Borkar2} would lead to the existence of optimal average cost policies.

In addition to existence of optimal policies, the weak Feller property has also recently been shown to lead to (asymptotic) consistency in approximation methods for controlled Markov processes with uncountable state and action spaces. In \cite{SaYuLi16,saldi2017asymptotic}, authors showed that optimal policies obtained from finite-model approximations to infinite-horizon discounted cost controlled Markov processes with Borel state and action spaces asymptotically achieve the optimal cost for the original problem under weak Feller property. Hence, the weak Feller property of filter process suggests that approximation results in \cite{SaYuLi16,saldi2017asymptotic}, which only require weak continuity conditions on the transition probability of a given controlled Markov model, are particularly suitable in developing approximation methods for partially observed models (through their reduction to fully observed models).

For control-free setups, the weak Feller property of the filter processes leads to the existence of invariant probability measures when the hidden Markov processes take their values in compact spaces or more general spaces under appropriate tightness/regularity conditions \cite{Lasserre,budhiraja2002,MaSt08}.

For the empirical consistency and convergence results involving the very popular particle filtering algorithms, weak Feller property of the filter processes is a commonly imposed, implicit, assumption (see e.g. \cite{ del98,crisan02}). Finally, for the study of ergodicity and asymptotic stability of nonlinear filters, weak Feller property also plays an important role (see \cite{stett89, budhiraja2000, budhiraja2002}).

\section{Main Results and Connections with the Literature}

\subsection{Statement of Main Results}

In this paper, we show the weak Feller property of the filter process under two different set of assumptions.
\begin{assumption}\label{TV_channel}
\begin{itemize}
\item[]
\item[(i)] The transition probability $\mathcal{T}(\cdot|x,u)$ is weakly continuous in $(x,u)$, i.e., for any $(x_n,u_n)\to (x,u)$, $\mathcal{T}(\cdot|x_n,u_n)\to \mathcal{T}(\cdot|x,u)$ weakly.
\item[(ii)] The observation channel $Q(\cdot|x,u)$ is continuous in total variation, i.e., for any $(x_n,u_n) \to (x,u)$, $Q(\cdot|x_n,u_n) \rightarrow Q(\cdot|x,u)$ in total variation.
\end{itemize}
\end{assumption}

\begin{assumption}\label{TV_kernel}
\begin{itemize}
\item[]
\item[(i)] The transition probability $\mathcal{T}(\cdot|x,u)$ is continuous in total variation in $(x,u)$, i.e., for any $(x_n,u_n)\to (x,u)$, $\mathcal{T}(\cdot|x_n,u_n) \to \mathcal{T}(\cdot|x,u)$ in total variation.
\item[(ii)] The observation channel $Q(\cdot|x)$ is independent of the control variable.
\end{itemize}
\end{assumption}

We now formally state the main results of our paper.

\begin{thm}[Feinberg et. al. \cite{FeKaZg14}]\label{TV_channel_thm}
Under Assumption \ref{TV_channel}, the transition probability $\eta(\cdot|z,u)$ of the filter process is weakly continuous in $(z,u)$.
\end{thm}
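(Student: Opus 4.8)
The plan is to reduce the weak continuity of $\eta$ to a convergence statement about the Bayesian update, and to use Assumption \ref{TV_channel}(ii) precisely to control the normalization inherent in that update. Fix $(z_n,u_n)\to(z,u)$. Since $\eta(\,\cdot\,|z,u)$ is the law of the random posterior $F(z,u,Y_1)$ with $Y_1\sim P(\,\cdot\,|z,u)$, proving $\eta(\,\cdot\,|z_n,u_n)\to\eta(\,\cdot\,|z,u)$ weakly amounts to showing
\[
\int_{\mathds{Y}} g\bigl(F(z_n,u_n,y)\bigr)\,P(dy|z_n,u_n)\;\longrightarrow\;\int_{\mathds{Y}} g\bigl(F(z,u,y)\bigr)\,P(dy|z,u)
\]
for every $g$ in a convergence-determining class on $\P(\mathds{X})$. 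I would take the class of functions $g(\nu)=G\bigl(\int f_1\,d\nu,\dots,\int f_k\,d\nu\bigr)$ with $f_i\in C_b(\mathds{X})$ and $G\in C_b(\R^k)$, which is convergence determining on $\P(\P(\mathds{X}))$; this isolates the genuinely nonlinear dependence of $g$ on the posterior, the purely linear case $g(\nu)=\int f\,d\nu$ collapsing, by the disintegration \eqref{r_kernel2}, to the predicted marginal and hence being automatic.

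First I would record the continuity of the \emph{predicted} measure $\bar z^{u}(\,\cdot\,):=\int_{\mathds{X}}\mathcal{T}(\,\cdot\,|x_0,u)\,z(dx_0)$: weak continuity of $\mathcal{T}$ together with the standard ``continuous convergence plus weak convergence'' lemma (if $h_n\to h$ continuously and $\mu_n\to\mu$ weakly with $\sup_n\|h_n\|_\infty<\infty$, then $\int h_n\,d\mu_n\to\int h\,d\mu$) yields $\bar z_n^{u_n}\to\bar z^{u}$ weakly. This is where Assumption \ref{TV_channel}(i) enters, and it is the easy half.

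The core of the argument turns the Bayesian normalization into an $L^1$ statement. Using separability of $\mathds{X}$ and total-variation continuity of $Q$, one can build a single probability measure $\lambda$ on $\mathds{Y}$ dominating $Q(\,\cdot\,|x,u_n)$ and $Q(\,\cdot\,|x,u)$ for all $x$ and all $n$ (cover a dense set of $x$'s and the countably many control values $\{u_n\}\cup\{u\}$); write $q(y|x,v)=dQ(\,\cdot\,|x,v)/d\lambda$. With $f_0\equiv 1$, set $N_i(y;z,u):=\int_{\mathds{X}}f_i(x)\,q(y|x,u)\,\bar z^{u}(dx)$, so that $\int f_i\,dF(z,u,y)=N_i/N_0$ and $P(dy|z,u)=N_0(y;z,u)\,\lambda(dy)$; the target integral becomes $\int_{\mathds{Y}} G(N_1/N_0,\dots,N_k/N_0)\,N_0\,\lambda(dy)$. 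The key reduction is that it suffices to show $N_i(\,\cdot\,;z_n,u_n)\to N_i(\,\cdot\,;z,u)$ in $L^1(\lambda)$ for each $i$: passing to a subsequence gives $\lambda$-a.e.\ convergence of each $N_i$, hence of the ratios on $\{N_0>0\}$, and since the integrand is dominated by $\|G\|_\infty N_0^{(n)}$ with $N_0^{(n)}\to N_0$ in $L^1(\lambda)$ and total mass $1$, a generalized dominated convergence theorem (Pratt/Scheff\'e) closes the argument, the subsequence principle upgrading it to the full sequence.

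It remains to prove $N_i(\,\cdot\,;z_n,u_n)\to N_i(\,\cdot\,;z,u)$ in $L^1(\lambda)$, which is the coordinate-free statement that $\nu_i^{(n)}:=\int_{\mathds{X}}f_i(x)\,Q(\,\cdot\,|x,u_n)\,\bar z_n^{u_n}(dx)\to\nu_i:=\int_{\mathds{X}}f_i(x)\,Q(\,\cdot\,|x,u)\,\bar z^{u}(dx)$ in total variation. Testing against $\|h\|_\infty\le1$ and writing $Q_h(x,v)=\int h\,dQ(\,\cdot\,|x,v)$, I would split the difference $\int f_i\,Q_h(\,\cdot\,,u_n)\,d\bar z_n^{u_n}-\int f_i\,Q_h(\,\cdot\,,u)\,d\bar z^{u}$ into a ``channel'' term and a ``measure'' term. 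The channel term is bounded, uniformly in $h$, by $\|f_i\|_\infty\int\|Q(\,\cdot\,|x,u_n)-Q(\,\cdot\,|x,u)\|_{TV}\,\bar z_n^{u_n}(dx)$, whose integrand tends to $0$ continuously in $x$ (again by total-variation continuity of $Q$), so the continuous-convergence lemma drives it to $0$. For the measure term, total-variation continuity of $Q$ makes $\{x\mapsto f_i(x)Q_h(x,u):\|h\|_\infty\le1\}$ a uniformly bounded, equicontinuous family, and weak convergence of $\bar z_n^{u_n}$ together with tightness forces the convergence uniformly over $\|h\|_\infty\le1$. The hard part throughout is exactly this passage: mere weak convergence of the joint laws of $(X_1,Y_1)$ does not transfer to the conditional posteriors, and Assumption \ref{TV_channel}(ii) is what restores continuity, both by supplying the common dominating measure $\lambda$ that turns Bayes' division into an almost-everywhere operation and by delivering the equicontinuity that makes the error terms vanish.
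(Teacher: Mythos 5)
Your argument is correct, and it reaches the conclusion by a genuinely different route than the paper, even though both proofs ultimately rest on the same analytic engine: the observation that total variation continuity of $Q$ makes the family $\{x\mapsto \int f(x')\,Q(A|x',u)\,\mathcal{T}(dx'|x,u)\}_{A\in\B(\mathds{Y})}$ (equivalently $\{f_iQ_h(\cdot,u)\}_{\|h\|_\infty\le1}$ in your notation) uniformly bounded and equicontinuous, so that weak convergence of the predicted measure $\bar z_n^{u_n}\to\bar z^{u}$ upgrades to convergence uniformly over the index set --- this is exactly the paper's Lemma~\ref{kernel_robust}, and your ``channel term / measure term'' split mirrors its two hypotheses. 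Where you diverge is in the architecture around that core: the paper never introduces densities; it metrizes $\P(\mathds{Z})$ with the bounded-Lipschitz norm, reduces weak continuity of $\eta$ to (i) total variation continuity of $P(dy|z,u)$ and (ii) $\int\rho(z_1^n,z_1)\,dP\to0$, and handles the Bayesian division implicitly through the sets $I_\pm^{(n)}$ together with the TV bound on $P$. You instead make Bayes' rule explicit via a common dominating measure $\lambda$, reduce everything to $L^1(\lambda)$ (i.e.\ total variation) convergence of the unnormalized numerators $N_i$, and close with a convergence-determining class on $\P(\P(\mathds{X}))$ plus Pratt's generalized dominated convergence. Your route buys a slightly stronger intermediate statement (TV convergence of the vector of unnormalized measures $\nu_i^{(n)}$) and connects naturally to the filter-stability literature; interestingly, the domination $Q(dy|x,u)\ll P(dy|z,u)$ that you construct is essentially the content of the paper's Theorem~\ref{aux}, which the paper proves separately but does not use in this proof. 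The paper's route buys freedom from Radon--Nikodym machinery. Three points in your write-up deserve a sentence each to be airtight: the existence of a \emph{jointly} measurable version of $q(y|x,v)$ (it follows because $x\mapsto q(\cdot|x,v)$ is continuous into $L^1(\lambda)$ by TV continuity); the extension of $\lambda$-domination from the countable dense set of $x$'s to all $x$ (again via TV limits); and a citation for the fact that the algebra of functions $\nu\mapsto G(\langle f_1,\nu\rangle,\dots,\langle f_k,\nu\rangle)$ is convergence determining on $\P(\P(\mathds{X}))$. None of these is a genuine gap.
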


\begin{thm}\label{TV_kernel_thm}
Under Assumption \ref{TV_kernel}, the transition probability $\eta(\cdot|z,u)$ of the filter process is weakly continuous in $(z,u)$.
\end{thm}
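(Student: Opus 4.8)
The plan is to reduce the weak continuity of $\eta$ to two separate continuity statements by routing everything through the one-step predictor. For $z\in\mathds{Z}$ and $u\in\mathds{U}$ define the predicted state measure $\bar z^{z,u}(\cdot):=\int_{\mathds{X}}\mathcal{T}(\cdot\,|x,u)\,z(dx)\in\P(\mathds{X})$. Because $Q$ does not depend on $u$ (Assumption \ref{TV_kernel}(ii)), the joint kernel in \eqref{r_kernel1} factors as $R(\,\cdot\times\cdot\,|u,z)=\bar z^{z,u}\otimes Q$, its $\mathds{Y}$-marginal is $P(\,\cdot\,|u,z)=\int_{\mathds{X}}Q(\,\cdot\,|x)\,\bar z^{z,u}(dx)$, and the kernel $F(\,\cdot\,|y,u,z)$ of \eqref{r_kernel2} is exactly the Bayesian posterior of $x_1$ given $y_1$ under $\bar z^{z,u}\otimes Q$. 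Hence, for any $g\in C_b(\mathds{Z})$,
\[
\int_{\mathds{Z}} g(z_1)\,\eta(dz_1|z,u)=\int_{\mathds{Y}} g\big(F(z,u,y)\big)\,P(dy|z,u),
\]
and the right-hand side depends on $(z,u)$ only through the joint measure $J^{z,u}:=\bar z^{z,u}\otimes Q$. Thus it suffices to prove: (A) the map $(z,u)\mapsto\bar z^{z,u}$ is continuous from $\P(\mathds{X})\times\mathds{U}$, with the weak topology on $\P(\mathds{X})$, into $(\P(\mathds{X}),\|\cdot\|_{TV})$; and (B) for joint probability measures $J$ on $\mathds{X}\times\mathds{Y}$ the functional $J\mapsto\int_{\mathds{Y}} g(B_J(y))\,P_J(dy)$, where $P_J$ is the $\mathds{Y}$-marginal of $J$ and $B_J(y)$ the conditional distribution of $x$ given $y$ under $J$, is continuous under total-variation convergence of $J$. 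Combining (A), the elementary bound $\|\mu\otimes Q-\mu'\otimes Q\|_{TV}\le\|\mu-\mu'\|_{TV}$ (which is where Assumption \ref{TV_kernel}(ii) enters, keeping $Q$ fixed), and (B) then yields the theorem.

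For (A), fix $(z_n,u_n)\to(z,u)$ and $\eps>0$. Since $\{z_n\}$ converges weakly it is tight, so there is a compact $K\subset\mathds{X}$ with $\sup_n z_n(K^c)<\eps$ and $z(K^c)<\eps$. I would split
\[
\|\bar z^{z_n,u_n}-\bar z^{z,u}\|_{TV}\le\|\bar z^{z_n,u_n}-\bar z^{z_n,u}\|_{TV}+\|\bar z^{z_n,u}-\bar z^{z,u}\|_{TV}.
\]
The first term is at most $\int\|\mathcal{T}(\cdot|x,u_n)-\mathcal{T}(\cdot|x,u)\|_{TV}\,z_n(dx)\le\sup_{x\in K}\|\mathcal{T}(\cdot|x,u_n)-\mathcal{T}(\cdot|x,u)\|_{TV}+2\eps$, and the supremum tends to $0$ by the uniform total-variation continuity of $\mathcal{T}$ (Assumption \ref{TV_kernel}(i)) on the compact set $K\times(\{u_n\}\cup\{u\})$. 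For the second term I would use that $x\mapsto\mathcal{T}(\cdot|x,u)$ is a bounded continuous map into $(\P(\mathds{X}),\|\cdot\|_{TV})$ and, on $K$, approximate it uniformly in total variation by a ``continuous simple'' map $\sum_i\psi_i(\cdot)\,\mathcal{T}(\cdot|x_i,u)$ built from a partition of unity $\{\psi_i\}\subset C_b(\mathds{X})$; since $\int\psi_i\,dz_n\to\int\psi_i\,dz$ by weak convergence, the approximant integrates continuously, and the tails on $K^c$ are controlled by $\eps$. Letting $\eps\to0$ gives $\bar z^{z_n,u_n}\to\bar z^{z,u}$ in total variation, hence $J^{z_n,u_n}\to J^{z,u}$ in total variation.

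For (B), which I expect to be the main obstacle, I would dominate the joints: given $J_n\to J$ in total variation, set $\Xi:=J+\sum_n 2^{-n}J_n$, write $\rho_n=dJ_n/d\Xi$ and $\rho=dJ/d\Xi$ (so $\rho_n\to\rho$ in $L^1(\Xi)$), and disintegrate $\Xi(dx,dy)=\kappa_y(dx)\,\Xi_\mathds{Y}(dy)$. Then the $\mathds{Y}$-marginals have $\Xi_\mathds{Y}$-densities $a_n(y)=\int\rho_n(x,y)\kappa_y(dx)$ and $a(y)=\int\rho(x,y)\kappa_y(dx)$, the posteriors are the measures with $\kappa_y$-densities $\rho_n(\cdot,y)/a_n(y)$ and $\rho(\cdot,y)/a(y)$, and the target becomes $\int g(B_n(y))a_n(y)\,\Xi_\mathds{Y}(dy)\to\int g(B(y))a(y)\,\Xi_\mathds{Y}(dy)$. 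Passing to a subsequence along which $\rho_n\to\rho$ $\Xi$-a.e.\ and $a_n\to a$ $\Xi_\mathds{Y}$-a.e.\ (both from the $L^1$ convergences), Fubini gives $\rho_n(\cdot,y)\to\rho(\cdot,y)$ $\kappa_y$-a.e.\ for $\Xi_\mathds{Y}$-a.e.\ $y$; together with $a_n(y)\to a(y)$, Scheff\'e's lemma upgrades this to $L^1(\kappa_y)$ convergence of the posterior densities, i.e.\ $\|B_n(y)-B(y)\|_{TV}\to0$, whence $g(B_n(y))\to g(B(y))$ by weak continuity of $g$ for a.e.\ $y$ with $a(y)>0$, so that $g(B_n(y))a_n(y)\to g(B(y))a(y)$ for $\Xi_\mathds{Y}$-a.e.\ $y$ (on $\{a=0\}$ both sides vanish since $a_n\to0$ and $g$ is bounded). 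As $|g(B_n(y))a_n(y)|\le\|g\|_\infty a_n(y)$ with $a_n\to a$ in $L^1(\Xi_\mathds{Y})$ and a.e., a generalized dominated convergence theorem (Pratt's lemma) gives convergence of the integrals along the subsequence; since the limit does not depend on the subsequence, the full sequence converges. The delicate points are handling the posteriors a.e.\ through the common disintegration and the fact that the densities $a_n$ are not uniformly bounded, which is precisely what Scheff\'e and Pratt are deployed to overcome. In contrast to Theorem \ref{TV_channel_thm}, no continuity of $Q$ is needed in (B): the total-variation convergence of the prior supplied by (A) is by itself strong enough to drive posterior continuity.
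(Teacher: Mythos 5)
Your proposal is correct, but it takes a genuinely different route from the paper's. The paper runs Theorem~\ref{TV_kernel_thm} through the same two-step program it uses for Theorem~\ref{TV_channel_thm} --- (i) total variation continuity of $P(dy_1|z_0,u_0)$ and (ii) $\int_{\mathds{Y}}\rho(z_1(z_0^n,u_n,y_1),z_1(z_0,u,y_1))P(dy_1|z_0,u)\to 0$ --- and proves both steps by repeated application of Lemma~\ref{kernel_robust} to families indexed by Borel sets, handling the posteriors through the sets $I_{\pm}^{(n)}$ and the countable family $\{f_m\}$ metrizing weak convergence. You instead factor the filter update through the one-step predictor $\bar z^{z,u}$ and the abstract Bayes map $J\mapsto(P_J,B_J)$. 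Your step (A), the weak-to-total-variation continuity of $(z,u)\mapsto\bar z^{z,u}$, is where Assumption~\ref{TV_kernel} actually enters, and its proof (tightness plus a partition-of-unity approximation of the TV-continuous map $x\mapsto\mathcal{T}(\cdot|x,u)$ on compacta) is essentially a re-derivation of Lemma~\ref{kernel_robust} in the special case needed. Your step (B) is a clean standalone fact not isolated in the paper --- total variation convergence of a joint measure forces convergence of the induced posteriors --- and your proof of it via a common dominating measure, Scheff\'e's lemma, and Pratt's lemma is sound; indeed your setup implicitly yields the quantitative bound $\int\|B_{J_n}(y)-B_J(y)\|_{TV}\,P_J(dy)\le 2\|J_n-J\|_{TV}$, which is stronger than the convergence in $\rho$ that the paper establishes. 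What your route buys is transparency: it makes plain why no condition on $Q$ is needed (once the predictor converges in total variation, so does $\bar z^{z,u}\otimes Q$, and (B) does the rest). What it gives up is uniformity with Theorem~\ref{TV_channel_thm}: under Assumption~\ref{TV_channel} the joint does not converge in total variation (only its $\mathds{Y}$-marginal does), so your decomposition cannot be recycled there, whereas the paper's skeleton covers both theorems. Two small points to tidy: in (A), take the partition of unity in $C_b(\mathds{X})$ (defined on all of $\mathds{X}$, not just $K$) so that weak convergence of $z_n$ applies, absorbing the discrepancy between integrating over $K$ and over $\mathds{X}$ into the tightness bound; and phrase the closing step of (B) as ``every subsequence admits a further subsequence converging to the same limit,'' which is what you clearly intend.
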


Theorem \ref{TV_channel_thm} is originally due to Feinberg et. al.~\cite{FeKaZg14}. The contribution of our paper is that the proof presented here is more direct and significantly more concise. Theorem \ref{TV_kernel_thm} establishes that under Assumption \ref{TV_kernel} (with no assumptions on the measurement model), the filter process is weakly continuous. This result has not been previously reported in the literature. 

The proofs of these results are presented in Section~\ref{ProofSection}.

\adk{
\begin{remark}
We note that there are examples where Assumptions Assumption \ref{TV_channel} or \ref{TV_kernel} are not satisfied (e.g. \cite[Section 8.2]{FeKaZg14}), however the filter kernel $\eta$ is still weak Feller. We refer the reader to the result \cite[Theorem 7.1]{FeKaZg14} which establishes weak Feller property of the filter kernel under further sets of assumptions. We also note that, after the first submission of this paper to Arxiv, \cite[Theorem 4.4]{FeKaZg_19} reported a result generalizing Theorem \ref{TV_kernel_thm} to the channel $Q(\cdot|x,u)$ depending on the control variable as well and being continuous in total variation in $u$.
\end{remark}
}

\subsection{Examples}\label{exmp}

In this section, we give concrete examples for the system and observation channel models which satisfy Assumption~\ref{TV_channel} or Assumption~\ref{TV_kernel}. Suppose that the system dynamics and the observation channel are represented as follows:
\begin{align*}
X_{t+1} &= H(X_t,U_t,W_t),\\
Y_t &= G(X_t,U_{t-1},V_t),
\end{align*}
where $W_t$ and $V_t$ are i.i.d. noise processes. This is, without loss of generality, always the case; that is, one can transform the dynamics of any partially observed model into this form  (see Lemma 1.2 in \cite{gihman2012controlled}, or Lemma 3.1 of \cite{BorkarRealization}).

\begin{itemize}
\item[(i)]
Suppose that $H(x,u,w)$ is a continuous function in $x$ and $u$. Then, the corresponding transition kernel is weakly continuous. To see this, observe that, for any $c\in C_b(\mathds{X})$, we have
\begin{align*}
&\int c(x_1)\mathcal{T}(dx_1|x_0^n,u_0^n)=\int c(H(x_0^n,u_0^n,w_0))\mu(dw_0)\\
&\to \int c(H(x_0,u_0,w_0))\mu(dw_0)=\int c(x_1)\mathcal{T}(dx_1|x_0,u_0),
\end{align*}
where we use $\mu$ to denote the probability model of the noise.
\item[(ii)]\label{ornii}
Suppose that $G(x,u,v)=g(x,u)+v$, where $g$ is a continuous function and $V_t$ admits a continuous density function $\varphi$ with respect to some reference measure $\nu$. Then, the channel is continuous in total variation. Notice that under this setup, we can write $Q(dy|x,u)=\varphi(y-h(x,u)) \nu(dy)$. Hence, the density of $Q(dy|x_n,u_n)$ converges to the density of $Q(dy|x,u)$ pointwise, and so, $Q(dy|x_n,u_n)$ converges to $Q(dy|x,u)$ in total variation by Scheff\'e's Lemma \cite[Theorem 16.12]{Bil95}. Hence, $Q(dy|x,u)$ is continuous in total variation under these conditions.
\item[(iii)]
Suppose that we have $H(x,u,w)=h(x,u)+w$, where $f$ is continuous and $W_t$ admits a continuous density function $\varphi$ with respect to some reference measure $\nu$. Then, the transition probability is continuous in total variation. Again, notice that with this setup we have $\mathcal{T}(dx_1|x_0,u_0)=\varphi(x_1-h(x_0,u_0)) \nu(dx_1)$. Thus, continuity of $\varphi$ and $h$ guarantees the pointwise convergence of the densities, so we can conclude that the transition probability is continuous in total variation by again Scheff\'e's Lemma.
\end{itemize}

The analysis in the paper will provide weak Feller results for a large class of partially observed control systems as reviewed in the aforementioned examples. In particular, if the state dynamics are affected by an additive noise process which admits a continuous density, we can guarantee weak Feller property of the filter process by means of Theorem~\ref{TV_kernel_thm} without referring to the noise model of the observation channel.

\subsection{Comparison with the Prior Results}

Weak Feller property of the control-free transition probability of the filter processes has been established using different approaches and different conditions. In \cite{budhiraja2000} it has been shown that, for continuous-time filter processes, if the signal process (state process of the partially observed model) is weak Feller and the measurement channel is an additive channel in the form $Y_t=\int_0^t h(X_u)du+V_t$, where $h$ is assumed to be continuous and possibly unbounded and $V_t$ is a standard Wiener process, then the filter process itself is also weak Feller. In \cite{budhiraja2002}, the authors study the discrete-time filter processes, where the state process noise may not be independent of the observation process noise; it has been shown that if the observation model is additive in the form $Y_t=h(X_t)+V_t$, where $h$ is assumed to be continuous and $V_t$ is an i.i.d. noise process which admits a continuous and bounded density function, then the observation and filter processes $(Y_t, Z_t)$ are jointly weak Feller. In \cite{stett89}, the weak Feller property of the filter process has been shown for both discrete and continuous time setups when the channel is additive, $Y_t=h(X_t)+V_t$, where $h$ is bounded and continuous and $V_t$ is an i.i.d. noise process with a continuous, bounded and positive density function. 

\cite {del98,crisan02} have studied the consistency of the particle filter methods where the weak Feller property of the filter process has been used to establish the convergence results. In \cite{del98}, it has been shown that the filter process is weak Feller under the assumption that the transition probability of the partially observed system is weak Feller and the measurement channel is an additive channel in the form $Y_t=h(X_t)+V_t$, where $h$ is assumed to be continuous and $V_t$ is an i.i.d. noise process, which admits a continuous and bounded density function with respect to the Lebesgue measure. In \cite{crisan02}, the weak Feller property of the filter process has been established under the assumption that the measurement channel admits a continuous and bounded density function with respect to the Lebesgue measure; i.e., the channel can be written in the following form: $Q(y\in A|x)=\int_A g(x,y)dy$ for any$A \in \B(\mathds{Y})$ and for any $x \in \mathds{X}$, where $g$ is a continuous and bounded function.

Weak Feller property of the controlled transition probability of the filter processes has been established, in the most general case to date, by Feinberg et.al.~\cite{FeKaZg14}. Under the assumption that the measurement channel is continuous in total variation and the transition kernel of the partially observed model is weak Feller, the authors have established the weak Feller property of the transition probability of the filter process. In Section~\ref{comp}, we will give a detailed discussion on the methods used by Feinberg et.al.~\cite{FeKaZg14}, and also, we will compare their approach with ours.


As reviewed above, the prior literature often used the additive channel model $Y_t=h(X_t,U_{t-1})+V_t$ with various regularity assumptions on $h$ and the noise model $V_t$. When the observation channel is additive $Y_t=h(X_t,U_{t-1})+V_t$, where $h$ is continuous and $V_t$ admits a continuous density function with respect to some measure $\mu$, one can show that the channel also admits a continuous density function, i.e., $Q(y\in A|x,u)=\int_A g(x,u,y) \mu(dy)$ for any $A \in \B(\mathds{Y})$ and for any $(x,u) \in \mathds{X} \times \mathds{U}$. When the observation channel has a continuous density function, the pointwise convergence of the density functions implies the total variation convergence by Scheff\'e's Lemma \cite[Theorem 16.12]{Bil95}. Thus, $g(x_k,u_k,y)\to g(x,u,k)$ for some $(x_k,u_k) \to (x,u)$ implies that $Q(\cdot|x_k,u_k)\to Q(\cdot|x,u)$ in total variation, i.e., the observation channel is continuous in total variation. 

In the following, we develop a relationship between the total variation continuity of the channel (as required by  \cite{FeKaZg14} and in our Theorem \ref{TV_channel_thm}) and the more restrictive density conditions on the measurement channels presented in the prior works \cite{budhiraja2000,budhiraja2002, crisan02, del98}.

In the theorem below, we show that having a continuous density is \emph{almost} equivalent to the condition that the observation channel is continuous in total variation. 
To our knowledge, it is the first result in the literature making the connection between channels which are continuous in total variation and channels which admit a density function.

\begin{thm}\label{aux}
Suppose that the observation channel $Q(dy|x,u)$ is continuous in total variation. Then, for any $(z,u) \in \mathds{Z}\times\mathds{U}$, we have, $\mathcal{T}(\cdot|z,u)$-a.s., that  $Q(dy|x,u) \ll P(dy|u,z)$ and
\begin{align*}
Q(dy|x,u) = g(x,u,y)P(dy|z,u)
\end{align*}
for a measurable function $g$, which satisfies for any $A \in \B(\mathds{Y})$ and for any $x_k \to x$
\begin{align*}
\int_A |g(x_k,u,y)-g(x,u,y)|P(dy|z,u)\to 0.
\end{align*}
\end{thm}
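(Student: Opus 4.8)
The plan is to fix $(z,u)$ and write $\pi := \mathcal{T}(\cdot|z,u) = \int_{\mathds{X}} \mathcal{T}(\cdot|x_0,u)\,z(dx_0) \in \P(\mathds{X})$ for the one-step predictive measure, so that the definition of $P$ becomes the mixture representation
\begin{align*}
P(\cdot|u,z) = \int_{\mathds{X}} Q(\cdot|x,u)\,\pi(dx).
\end{align*}
I would then proceed in three steps. First I would establish $Q(\cdot|x,u)\ll P(\cdot|u,z)$ not merely for $\pi$-a.e.\ $x$ but for \emph{every} $x\in\supp(\pi)$; since $\mathds{X}\subset\mathds{R}^n$ is second countable, $\supp(\pi)$ carries full $\pi$-mass, which yields the claimed a.s.\ statement. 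Second I would extract a jointly measurable density $g$. Third, the asserted $L^1(P)$-continuity of $g$ would follow almost for free from the total variation continuity of $Q$. The crux is the first step: the mixture structure by itself gives absolute continuity only up to an $A$-dependent null set of $x$, and total variation continuity is precisely what upgrades this to a statement holding at every support point.

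For the first step I would fix a Borel set $A$ with $P(A|u,z)=0$. The mixture representation gives $\int_{\mathds{X}} Q(A|x,u)\,\pi(dx)=0$, so $N_A:=\{x: Q(A|x,u)>0\}$ is $\pi$-null. For any $x_0\in\supp(\pi)$, every ball $B(x_0,1/k)$ has positive $\pi$-measure and hence meets $\mathds{X}\setminus N_A$, so I may choose $x_k\in B(x_0,1/k)$ with $Q(A|x_k,u)=0$. Since $x_k\to x_0$ and $Q$ is continuous in total variation, $Q(A|x_0,u)=\lim_k Q(A|x_k,u)=0$. As $A$ ranges over all $P$-null sets, this gives $Q(\cdot|x_0,u)\ll P(\cdot|u,z)$ for every $x_0\in\supp(\pi)$, hence $\pi$-a.s. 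I expect this to be the main obstacle, and it is exactly where total variation continuity is indispensable: for a channel such as $Q(\cdot|x)=\delta_x$, which is not TV-continuous, the conclusion is false.

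For the second step I would pass to the joint measures $\Pi(dx,dy):=Q(dy|x,u)\,\pi(dx)$ and $\Lambda(dx,dy):=P(dy|u,z)\,\pi(dx)$ on $\mathds{X}\times\mathds{Y}$. The first step shows $\Pi\ll\Lambda$, so the Radon--Nikodym derivative $g:=d\Pi/d\Lambda$ is jointly measurable, and uniqueness of the disintegration over the common $\mathds{X}$-marginal $\pi$ gives $Q(dy|x,u)=g(x,u,y)\,P(dy|u,z)$ for $\pi$-a.e.\ $x$ (with $u$ carried as a parameter). For the third step, whenever $x_k\to x$ are support points the densities $g(x_k,u,\cdot)$ and $g(x,u,\cdot)$ exist, and the total variation distance equals the $L^1(P)$-distance of densities,
\begin{align*}
\int_{\mathds{Y}} |g(x_k,u,y)-g(x,u,y)|\,P(dy|u,z)=\|Q(\cdot|x_k,u)-Q(\cdot|x,u)\|_{TV}\to 0,
\end{align*}
and restricting the integral to any $A\in\B(\mathds{Y})$ only decreases it. The remaining work beyond the support-approximation argument is routine measure theory.
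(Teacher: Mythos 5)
Your proof is correct, and for the key step (establishing $Q(\cdot|x,u)\ll P(\cdot|u,z)$ for $\mathcal{T}(\cdot|z,u)$-a.e.\ $x$) it takes a genuinely different and arguably cleaner route than the paper. The paper's argument is a compactness/total-boundedness construction: it takes compact sets $K_n$ of $\mathcal{T}(\cdot|z,u)$-measure close to one, uses total variation continuity to cover the image $\{Q(\cdot|x,u):x\in K_n\}$ by finitely many balls in $(\P(\mathds{Y}),\|\cdot\|_{TV})$, builds finite-mixture approximations $P_n$ of $P$ with $\|P-P_n\|_{TV}<1/n$, and then chains an $\varepsilon$--$\delta$ absolute-continuity argument through the approximations. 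Your argument instead fixes a single $P$-null set $A$, observes that $\{x:Q(A|x,u)>0\}$ is $\pi$-null, and upgrades this to \emph{every} point of $\supp(\pi)$ by approximating a support point from within the complement of that null set and invoking continuity of $x\mapsto Q(A|x,u)$; since the exceptional set $\mathds{X}\setminus\supp(\pi)$ does not depend on $A$, the quantifiers close up correctly and $\pi(\supp(\pi))=1$ in the separable setting finishes the step. This is shorter, avoids the finite-mixture bookkeeping (including the unproved claim $\|P-P_n\|_{TV}<1/n$ in the paper), and makes transparent that only setwise continuity of $Q$ in $x$ is needed for absolute continuity, with full total variation continuity reserved for the $L^1$ statement. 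Your second and third steps are also more careful than the paper's on two points it glosses over: joint measurability of $g$ (via $d\Pi/d\Lambda$ for the joint measures on $\mathds{X}\times\mathds{Y}$ and uniqueness of disintegration) and the identification of the $L^1(P)$ distance of densities with the total variation norm, which is exactly what the paper's explicit $A_+^{(k)}/A_-^{(k)}$ decomposition proves by hand. The only caveat, which you already flag, is that the displayed convergence is only meaningful for $x_k, x$ in the full-measure set where the density identity holds, the same implicit restriction present in the paper's statement.
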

\begin{proof}
Fix any $(z,u)$. We first show that $Q(dy|x,u)\ll P(dy|z,u)$, $\mathcal{T}(\cdot|z,u)$-a.s.. Note that $Q(dy|x,u)\ll P(dy|z,u)$ if and only if, for all $\varepsilon > 0$, there exits $\delta > 0$ such that $Q(A|x,u) < \varepsilon$ whenever $P(A|z,u) < \delta$. For each $n\geq1$, let $K_n \subset \mathds{X}$ be compact such that $\mathcal{T}(K_n|z,u) > 1-/3n$. As $Q(dy|x,u)$ is continuous in total variation norm, the image of $K_n \times \{u\}$ under $Q(dy|x,u)$ is compact in $\P(\mathds{Y})$. Hence, there exist $\{\nu_1,\ldots,\nu_l\} \subset \P(\mathds{Y})$ such that
\begin{align}
\max_{x \in K_n} \min_{i=1,\ldots,l} \|Q(\cdot|x,u) - \nu_i\|_{TV} < 1/3n. \nonumber
\end{align}
Define the following stochastic kernel $\nu_{n}(\cdot|x,u) = \arg\min_{\nu_i} \|Q(\cdot|x,u) - \nu_i\|_{TV}$. Then, we define $P_{n}(\cdot|z,u) = \int_{\mathds{X}} \nu_n(\cdot|x,u) \mathcal{T}(dx|z,u)$. One can prove that $\|P(\cdot|z,u) - P_n(\cdot|z,u)\|_{TV} < 1/n$. Moreover, since $P_n(\cdot|z,u)$ is a mixture of finite probability measures $\{\nu_1,\ldots,\nu_l\}$, we have that $\nu_{n}(\cdot|x,u) \ll P_n(\cdot|z,u)$ for all $x \in C_n$, where $\mathcal{T}(C_n|z,u)=1$. Let $C = \bigcap_{n} C_n$, and so, $\mathcal{T}(C|z,u)=1$. We claim that if $x \in C$, then $Q(dy|x,u)\ll P(dy|z,u)$, which completes the proof of the first statement. To prove the claim, fix any $\varepsilon > 0$ and choose $n\geq1$ such that $\varepsilon > 2/3n$. Then, there exists $\delta > 0$ such that $\nu_{n}(A|x,u) < \varepsilon/2$ whenever $P_{n}(A|z,u) < \delta$. This implies that $Q(A|x,u) < \varepsilon$ whenever $P(A|z,u) < \delta + 1/n$. Hence, $Q(dy|x,u)\ll P(dy|z,u)$.

To show the second claim, for any $A \in \B(\mathds{Y})$ and for any $x_k \to x$ , we define
\[A_+^{(k)}:=\{y \in A: g(x_k,u,y)>g(x,u,y)\},\]
\[A_-^{(k)}:=\{y \in A: g(x_k,u,y)<g(x,u,y)\}.\]
With these sets, we have
\begin{align*}
 &\int_{A} |g(x_k,u,y)-g(x,u,y)| P(dy|z,u) \\
& = \int_{A_+^{(k)}} g(x_k,u,y)P(dy|z,u)-\int_{A_+^{(k)}}g(x,u,y) P(dy|z,u) \\
&\phantom{xx}+ \int_{A_-^{(k)}} g(x,u,y)P(dy|z,u)-\int_{A_-^{(k)}}g(x_k,u,y) P(dy|z,u)\\
&\leq |Q(A_+^{(k)}|x_k,u)-Q(A_+^{(k)}|x,u)| + |Q(A_-^{(k)}|x_k,u)-Q(A_-^{(k)}|x,u)| \\
&\leq 2 \|Q(\cdot|x_k,u) - Q(\cdot|x,u)\|_{TV} \to 0.
\end{align*}

\end{proof}
We again emphasize that weak Feller property of the filter process under Assumption~\ref{TV_channel} has been first established by \cite{FeKaZg14} using different method compared to ours. Our method is significantly more concise and direct. It is also important to note that Assumption~\ref{TV_kernel} completely eliminates any restriction on the observation channel to establish the weak Feller property of filter process. This relaxation is quite important in practice since modelling the noise on the observation channel in control problems is quite cumbersome, and in general, infeasible. But, in many problems that arise in practice, the transition probability has a continuous density with respect to some reference measure, which directly implies, via Scheff\'e's Lemma, the total variation continuity of the transition probability. We also note that the weak Feller property under only Assumption~\ref{TV_kernel}-(i) cannot be established. Indeed, Example~4.1 of \cite{FeKaZg14} shows that the total variation continuity assumption on the observation channel cannot be relaxed even when the transition kernel is continuous in total variation to prove weak Feller property of the filter process under a controlled observation channel model: A careful look at the counterexample shows that it indeed uses the discontinuity of the observation channel in the control action to prove that the filter process cannot be weak Feller when the observation channel is not continuous in total variation and the transition kernel is continuous in total variation.


\section{Proofs}\label{ProofSection}

The following result will play a key role for the proofs of main results. The proof can be found on the appendix.

\begin{lem}\label{kernel_robust}
Let $\mathds{X}$ be a Borel space. Suppose that we have a family of uniformly bounded real Borel measurable functions $\{f_{n,\lambda}\}_{n\geq1,\lambda\in \Lambda}$ and $\{f_{\lambda}\}_{\lambda \in \Lambda}$, for some set $\Lambda$. If, for any $x_n \to x$ in $\mathds{X}$, we have
\begin{align}\label{f1}
&\lim_{n\rightarrow\infty}\sup_{\lambda \in \Lambda}|f_{n,\lambda}(x_n)-f_{\lambda}(x)| = 0
\end{align}
\begin{align}\label{f2}
&\lim_{n\rightarrow\infty}\sup_{\lambda \in \Lambda}|f_{\lambda}(x_n)-f_{\lambda}(x)| = 0,
\end{align}
then, for any $\mu_n \to \mu$ weakly in $\P(\mathds{X})$, we have
\begin{align*}
\lim_{n\rightarrow\infty}\sup_{\lambda \in \Lambda}\bigg|\int_{\mathds{X}} f_{n,\lambda}(x)\mu_n(dx)-\int_{\mathds{X}} f_{\lambda}(x)\mu(dx)\bigg| = 0.
\end{align*}
\end{lem}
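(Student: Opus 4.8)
The plan is to convert the weak convergence $\mu_n \to \mu$ of measures into almost sure convergence of random elements via the Skorokhod representation theorem, and then to reduce the uniform (over $\lambda$) statement to a single application of the bounded convergence theorem. Since $\mathds{X}$ is a Borel space, hence separable metric, and $\mu_n \to \mu$ weakly, Skorokhod's theorem furnishes a probability space $(\Omega,\mathcal{F},\mathbb{P})$ carrying random elements $X_n, X$ with laws $\mu_n,\mu$ respectively such that $X_n \to X$ almost surely. With this coupling, for each fixed $\lambda$,
\begin{align*}
\left| \int_{\mathds{X}} f_{n,\lambda}\,d\mu_n - \int_{\mathds{X}} f_\lambda\,d\mu \right| = \left| \mathbb{E}\big[ f_{n,\lambda}(X_n) - f_\lambda(X) \big] \right| \le \mathbb{E}\big[ \, | f_{n,\lambda}(X_n) - f_\lambda(X) | \, \big].
\end{align*}

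Next I would argue by contradiction, which is what lets me pass the supremum over $\lambda$ through the expectation cleanly. Suppose the conclusion fails; then there exist $\varepsilon > 0$, a subsequence (which I relabel as $n$), and indices $\lambda_n \in \Lambda$ with $|\int f_{n,\lambda_n}\,d\mu_n - \int f_{\lambda_n}\,d\mu| > \varepsilon$ for all $n$. Combining this with the displayed bound gives $\varepsilon < \mathbb{E}[h_n]$, where $h_n := | f_{n,\lambda_n}(X_n) - f_{\lambda_n}(X) |$. For each fixed $n$ the index $\lambda_n$ is a single element of $\Lambda$, so $h_n$ is a genuine measurable random variable, bounded by $2M$, where $M$ is a uniform bound for the two families. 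On the almost sure event where $X_n(\omega) \to X(\omega)$, hypothesis \eqref{f1} applied to the sequence $x_n = X_n(\omega) \to x = X(\omega)$ yields
\begin{align*}
h_n(\omega) \le \sup_{\lambda \in \Lambda} | f_{n,\lambda}(X_n(\omega)) - f_\lambda(X(\omega)) | \longrightarrow 0 ,
\end{align*}
so $h_n \to 0$ almost surely. By the bounded convergence theorem $\mathbb{E}[h_n] \to 0$, contradicting $\varepsilon < \mathbb{E}[h_n]$.

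The step I expect to be the main obstacle is precisely the interchange of the supremum over $\lambda$ with the integral: when $\Lambda$ is uncountable, the map $\omega \mapsto \sup_\lambda | f_{n,\lambda}(X_n(\omega)) - f_\lambda(X(\omega)) |$ need not be measurable, so one cannot integrate it directly. The contradiction device circumvents this: after extracting a subsequence one works with a single index $\lambda_n$ per $n$, for which $h_n$ is measurable, while the (possibly non-measurable) supremum enters only as a pointwise upper bound inside hypothesis \eqref{f1} and is never integrated. A secondary point to check is the applicability of Skorokhod's theorem, which is standard here since a Borel space is separable metric. I note that this route uses only \eqref{f1}; condition \eqref{f2}, which asserts an equicontinuity of the family $\{f_\lambda\}$, is not needed for this argument. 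It would, however, be the essential ingredient in a more elementary proof that bypasses Skorokhod's theorem by splitting $\int f_{n,\lambda}\,d\mu_n - \int f_\lambda\,d\mu$ into $\int (f_{n,\lambda}-f_\lambda)\,d\mu_n$ and $\int f_\lambda\,d(\mu_n-\mu)$, where \eqref{f2} would be used to force the second term to vanish uniformly in $\lambda$ under weak convergence.
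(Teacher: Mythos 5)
Your proof is correct, but it takes a genuinely different route from the paper's. The paper's own argument uses hypothesis (\ref{f2}) to conclude that $\{f_\lambda\}_{\lambda\in\Lambda}$ is equicontinuous, invokes the Arzel\`a--Ascoli theorem to replace this family by a finite $\epsilon$-net of continuous bounded functions on a compact set supplied by Prokhorov tightness of $\{\mu_n\}$, and then reduces the claim to weak convergence tested against the finitely many fixed functions of the net; hypothesis (\ref{f1}) enters there only to transfer the net from $\{f_\lambda\}$ to $\{f_{n,\lambda}\}$ for large $n$. You instead couple the measures via the Skorokhod representation theorem (legitimate here, since a Borel subset of $\mathds{R}^n$ is a separable metric space) and dispose of the supremum over $\lambda$ by contradiction, extracting one index $\lambda_n$ per $n$ so that only genuinely measurable random variables $h_n$ are ever integrated, the possibly non-measurable supremum serving only as a pointwise numerical majorant; bounded convergence then finishes the argument. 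Two remarks. First, your observation that (\ref{f2}) is not needed is accurate, and in fact (\ref{f1}) implies (\ref{f2}): given $x_m\to x$, apply (\ref{f1}) to the constant sequence at $x_m$ to choose $N(m)$ strictly increasing with $\sup_\lambda|f_{N(m),\lambda}(x_m)-f_\lambda(x_m)|<1/m$, then apply (\ref{f1}) to the sequence equal to $x_m$ at index $N(m)$ and to $x$ at all other indices, and combine via the triangle inequality. Second, after passing to a subsequence you tacitly apply (\ref{f1}) along that subsequence; this is valid (pad the sequence of evaluation points with $x$ at the discarded indices), but it deserves one explicit line. What your approach buys is brevity and the elimination of a hypothesis; what the paper's buys is a proof that avoids the Skorokhod coupling, at the price of the Arzel\`a--Ascoli and tightness machinery.
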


\ns{ 
In Theorem~\ref{TV_channel_thm} and Theorem~\ref{TV_kernel_thm}, we need to show that, for every $(z_0^n,u_n) \to (z_0,u)$ in $\mathds{Z} \times \mathds{U}$, we have
\begin{align*}
&\sup_{\|f\|_{BL}\leq1}\bigg|\int_{\mathds{Z}} f(z_1)\eta(dz_1|z_0^n,u_n)- \int_\mathds{Z} f(z_1)\eta(dz_1|z_0,u)\bigg|\to 0,
\end{align*}
where we equip $\mathds{Z}$ with the metric $\rho$ to define bounded-Lipschitz norm $\|f\|_{BL}$ of any Borel measurable function $f:\mathds{Z}\rightarrow \mathds{R}$. We can equivalently write this as
\begin{align}
&\sup_{\|f\|_{BL}\leq1}\bigg|\int_{\mathds{Y}} f(z_1(z_0^n,u_n,y_1))P(dy_1|z_0^n,u_n) - \int_{\mathds{Y}} f(z_1(z_0,u,y_1))P(dy_1|z_0,u)\bigg|\to 0. \label{first_step}
\end{align}
The term in equation (\ref{first_step}) can be upper bounded as follows:
\begin{align}\label{second_step}
&\sup_{\|f\|_{BL}\leq1} \bigg|\int_{\mathds{Y}} f(z_1(z_0^n,u_n,y_1))P(dy_1|z_0^n,u_n)- \int_{\mathds{Y}} f(z_1(z_0,u,y_1))P(dy_1|z_0,u) \bigg|\nonumber\\
&\leq\sup_{\|f\|_{BL}\leq1} \bigg| \int_{\mathds{Y}} f(z_1(z_0^n,u_n,y_1))P(dy_1|z_0^n,u_n) - \int_{\mathds{Y}} f(z_1(z_0^n,u_n,y_1))P(dy_1|z_0,u) \bigg|\nonumber\\
&\qquad+ \sup_{\|f\|_{BL}\leq1} \int_{\mathds{Y}} \big| f(z_1(z_0^n,u_n,y_1)) -f(z_1(z_0,u,y_1)) \big| P(dy_1|z_0,u)\nonumber\\
&\leq \|P(\cdot|z_0^n,u_n) - P(\cdot|z_0,u)\|_{TV}\nonumber\\
&\qquad+ \sup_{\|f\|_{BL}\leq1}\int_{\mathds{Y}} \big| f(z_1(z_0^n,u_n,y_1)) -f(z_1(z_0,u,y_1)) \big|P(dy_1|z_0,u),
\end{align}
where, in the last inequality, we have used $\|f\|_{\infty} \leq \|f\|_{BL} \leq 1$. To prove that (\ref{second_step}) (and so (\ref{first_step})) goes to $0$, it is sufficient to establish the following results:
\begin{itemize}
\item[(i)] $P(dy_1|z_0,u_0)$ is continuous in total variation,
\item[(ii)] $\lim_{n\rightarrow\infty} \int_{\mathds{Y}} \rho(z_1(z_0^n,u_n,y_1),z_1(z_0,u,y_1))P(dy_1|z_0,u) = 0$ as $(z_0^n,u_n) \rightarrow (z_0,u)$.
\end{itemize}
Indeed, suppose that (i) and (ii) hold. Then, the first term in (\ref{second_step}) goes to $0$ as $P(\cdot|z_0,u)$ is continuous in total variation. For the second term in (\ref{second_step}), we have
\begin{align*}
& \sup_{\|f\|_{BL}\leq1}\int_{\mathds{Y}} \big| f(z_1(z_0^n,u_n,y_1))-f(z_1(z_0,u,y_1)) \big| P(dy_1|z_0,u)\\
&\leq  \int_{\mathds{Y}} \rho(z_1(z_0^n,u_n,y_1),z_1(z_0,u,y_1))P(dy_1|z_0,u) \\
&\rightarrow 0 \text{ } \text{as $n \rightarrow \infty$}\text{      }\,\,\,\,(\text{by (ii)}). 
\end{align*}
Therefore, to complete the proof of Theorem~\ref{TV_channel_thm} and Theorem~\ref{TV_kernel_thm}, we will prove (i) and (ii). 
}

\subsection{Proof of Theorem \ref{TV_channel_thm}}

We first prove (i); that is, $P(dy_1|z_0,u)$ is continuous in total variation. To this end, let $(z_0^n,u_n) \rightarrow (z_0,u)$. Then, we write
\begin{align*}
&\sup_{A \in \B(\mathds{Y})} \big|P(A|z_0^n,u_n) - P(A|z_0,u)\big|\\
&=\sup_{A \in \B(\mathds{Y})} \bigg|\int_{\mathds{X}} Q(A|x_1,u_n)\mathcal{T}(dx_1|z_0^n,u_n) - \int_{\mathds{X}} Q(A|x_1,u)\mathcal{T}(dx_1|z_0,u) \bigg|,
\end{align*}
where $\mathcal{T}(dx_1|z_0^n,u_n) \coloneqq \int_{\mathds{X}} \mathcal{T}(dx_1|x_0,u_n)z_0^n(dx_0)$. Note that, by Lemma \ref{kernel_robust}, we can show that $\mathcal{T}(dx_1|z_0^n,u_n) \to \mathcal{T}(dx_1|z_0,u)$ weakly. Indeed, if $g \in C_b(\mathds{X})$, then we define $r_n(x_0) = \int_{\mathds{X}} g(x_1) \mathcal{T}(dx_1|x_0,u_n)$ and $r(x_0) = \int_{\mathds{X}} g(x_1) \mathcal{T}(dx_1|x_0,u)$. Since $\mathcal{T}(dx_1|x_0,u)$ is weakly continuous, we have $r_n(x_0^n) \rightarrow r(x_0)$ when $x_0^n\rightarrow x_0$. Hence, by Lemma \ref{kernel_robust}, we have
\begin{align}
\lim_{n\rightarrow\infty} \biggl| \int_{\mathds{X}} r_n(x_0) z_0^n(dx_0) - \int_{\mathds{X}} r(x_0) z_0(dx_0) \biggr| = 0. \nonumber
\end{align}
Hence, $\mathcal{T}(dx_1|z_0^n,u_n) \to \mathcal{T}(dx_1|z_0,u)$ weakly. Moreover, the families of functions $\{Q(A|\,\cdot\,,u_n)\}_{n\geq1,A\in \B(\mathds{Y})}$ and $\{Q(A|\,\cdot\,,u)\}_{A\in\B(\mathds{Y})}$ satisfy the conditions of Lemma \ref{kernel_robust} as $Q$ is continuous in total variation distance. Therefore, Lemma \ref{kernel_robust} yields that
\begin{align*}
&\lim_{n\rightarrow\infty} \sup_{A \in \B(\mathds{Y})} \bigg|\int_{\mathds{X}} Q(A|x_1,u_n)\mathcal{T}(dx_1|z_0^n,u_n) - \int_{\mathds{X}} Q(A|x_1,u)\mathcal{T}(dx_1|z_0,u) \bigg| = 0.
\end{align*}
Thus, $P(dy_1|z_0,u)$ is continuous in total variation.

To prove (ii), we write
\begin{align*}
&\int_{\mathds{Y}} \rho(z_1(z_0^n,u_n,y_1),z_1(z_0,u,y_1))P(dy_1|z_0,u) \\
&= \int_{\mathds{Y}} \sum_{m=1}^{\infty} 2^{-m+1} \bigg| \int_{\mathds{X}} f_m(x_1) z_1(z_0^n,u_n,y_1)(dx_1) \nonumber\\
&\phantom{xxxxxxxxxxxxxxxxx}- \int_{\mathds{X}} f_m(x_1) z_1(z_0,u,y_1)(dx_1)\bigg|P(dy_1|z_0,u) \\
&=  \sum_{m=1}^{\infty} 2^{-m+1} \int_{\mathds{Y}} \bigg| \int_{\mathds{X}} f_m(x_1) z_1(z_0^n,u_n,y_1)(dx_1) \nonumber \\
&\phantom{xxxxxxxxxxxxxxxxx}- \int_{\mathds{X}} f_m(x_1) z_1(z_0,u,y_1)(dx_1)\bigg|P(dy_1|z_0,u),
\end{align*}
where we have used Fubini's theorem with the fact that $\sup_{m} \|f_m\|_{\infty} \leq 1$. For each $m$, let us define
\begin{align}\label{I_+}
&I_+^{(n)}:=\biggl\{y_1 \in \mathds{Y}: \int_{\mathds{X}} f_m(x_1) z_1(z_0^n,u_n,y_1)(dx_1) > \int_{\mathds{X}} f_m(x_1) z_1(z_0,u,y_1)(dx_1) \biggr\}\nonumber\\
&I_-^{(n)}:=\biggl\{y_1 \in \mathds{Y}: \int_{\mathds{X}} f_m(x_1) z_1(z_0^n,u_n,y_1)(dx_1) \leq \int_{\mathds{X}} f_m(x_1) z_1(z_0,u,y_1)(dx_1) \biggr\}.
\end{align}
Then, we can write
\begin{align*}
& \int_{\mathds{Y}} \bigg| \int_{\mathds{X}} f_m(x_1) z_1(z_0^n,u_n,y_1)(dx_1)  -\int_{\mathds{X}} f_m(x_1) z_1(z_0,u,y_1)(dx_1)\bigg|P(dy_1|z_0,u)\\
& =\int_{I_+^{(n)}} \biggl(\int_{\mathds{X}} f_m(x_1) z_1(z_0^n,u_n,y_1)(dx_1) - \int_{\mathds{X}} f_m(x_1) z_1(z_0,u,y_1)(dx_1)\biggr)P(dy_1|z_0,u)\\
& +\int_{I_-^{(n)}} \biggl(\int_{\mathds{X}} f_m(x_1) z_1(z_0,u,y_1)(dx_1) - \int_{\mathds{X}} f_m(x_1) z_1(z_0^n,u_n,y_1)(dx_1)\biggr)P(dy_1|z_0,u).
\end{align*}
In the sequel, we only consider the term with the set $I_+^{(n)}$. The analysis for the other one follows from the same steps. We have
\begin{align*}
& \int_{I_+^{(n)}} \bigg(\int_{\mathds{X}} f_m(x_1) z_1(z_0^n,u_n,y_1)(dx_1) - \int_{\mathds{X}} f_m(x_1) z_1(z_0,u,y_1)(dx_1)\bigg)P(dy_1|z_0,u)\\
&\leq  \int_{I_+^{(n)}} \int_{\mathds{X}} f_m(x_1) z_1(z_0^n,u_n,y_1)(dx_1)P(dy_1|z_0,u)\\
&\qquad\quad- \int_{I_+^{(n)}} \int_{\mathds{X}} f_m(x_1) z_1(z_0^n,u_n,y_1)(dx_1)P(dy_1|z_0^n,u_n)\\
&+ \int_{I_+^{(n)}} \int_{\mathds{X}} f_m(x_1) z_1(z_0^n,u_n,y_1)(dx_1)P(dy_1|z_0^n,u_n)\\
&\qquad\quad- \int_{I_+^{(n)}} \int_{\mathds{X}} f_m(x_1) z_1(z_0,u,y_1)(dx_1)P(dy_1|z_0,u)\\
&\leq \|P(dy_1|z_0,u)-P(dy_1|z_0^n,u_n)\|_{TV}\\
&\quad+\int_{\mathds{X}} \int_{I_+^{(n)}} f_m(x_1) Q(dy_1|x_1,u_n) \mathcal{T}(dx_1|z_0^n,u_n) \nonumber \\
&\phantom{xxxxxxxxxxxxxxxxxxxxxxxx}-\int_{\mathds{X}} \int_{I_+^{(n)}} f_m(x_1) Q(dy_1|x_1,u) \mathcal{T}(dx_1|z_0,u),
\end{align*}
where we have used $\|f_m\|_{\infty} \leq 1$ in the last inequality. The first term above goes to $0$ since $P(dy_1|z_0,u)$ is continuous in total variation. For the second term, we use Lemma~\ref{kernel_robust}. Indeed, families of functions $\{ f_m(\cdot) Q(A|\,\cdot\,,u_n): n\geq1, A \in \B(\mathds{Y})\}$ and $\{ f_m(\cdot) Q(A|\,\cdot\,,u): A \in \B(\mathds{Y})\}$ satisfy the conditions in Lemma~\ref{kernel_robust} as $Q$ is continuous in total variation. Hence, the second term converges to $0$ by Lemma~\ref{kernel_robust} since $\mathcal{T}(dx_1|z_0^n,u_n) \rightarrow \mathcal{T}(dx_1|z_0,u)$ weakly. Hence, for each $m$, we have
\begin{align*}
&\lim_{n\rightarrow\infty} \int_{\mathds{Y}} \bigg| \int_{\mathds{X}} f_m(x_1) z_1(z_0^n,u_n,y_1)(dx_1)\\
&\phantom{xxxxxxxxxxxxxxxx} - \int_{\mathds{X}} f_m(x_1) z_1(z_0,u,y_1)(dx_1)\bigg|P(dy_1|z_0,u) = 0. 
\end{align*}
By the dominated convergence theorem, we then have
\begin{align*}
&\lim_{n\rightarrow\infty}\int_{\mathds{Y}} \rho(z_1(z_0^n,u_n,y_1),z_1(z_0,u,y_1))P(dy_1|z_0,u)\\
&\leq  \sum_{m=1}^{\infty} 2^{-m+1} \lim_{n\rightarrow\infty} \int_{\mathds{Y}} \bigg| \int_{\mathds{X}} f_m(x_1) z_1(z_0^n,u_n,y_1)(dx_1)\\
&\qquad- \int_{\mathds{X}} f_m(x_1) z_1(z_0,u,y_1)(dx_1)\bigg|P(dy_1|z_0,u) = 0.
\end{align*}
This establishes (ii), which completes the proof together with (i).

\subsection{Proof of Theorem \ref{TV_kernel_thm}}

We first show (i); that is, $P(dy_1|z_0,u_0)$ is continuous total variation. Let $(z_0^n,u_n) \rightarrow (z_0,u)$. Then, we have
\begin{align*}
&\sup_{A \in \B(\mathds{Y})}|P(A|z_0^n,u_n) - P(A|z_0,u)|\\
&=\sup_{A \in \B(\mathds{Y})} \bigg|\int_{\mathds{X}} \int_{\mathds{X}} Q(A|x_1)\mathcal{T}(dx_1|x_0,u_n)z_0^n(dx_0) \\
&\phantom{xxxxxxxxxxxxx}- \int_{\mathds{X}} \int_{\mathds{X}} Q(A|x_1)\mathcal{T}(dx_1|x_0,u)z_0(dx_0) \bigg|.
\end{align*}
For each $A \in \B(\mathds{Y})$ and $n\geq1$, we define
$$f_{n,A}(x_0) = \int_{\mathds{X}} Q(A|x_1)\mathcal{T}(dx_1|x_0,u_n)$$ and $$f_{A}(x_0) = \int_{\mathds{X}} Q(A|x_1)\mathcal{T}(dx_1|x_0,u).$$ Then, for all $x_0^n \to x_0$, we have
\begin{align*}
&\lim_{n\rightarrow\infty} \sup_{A \in \B(\mathds{Y})} |f_{n,A}(x_0^n) - f_{A}(x_0)|\\
& =\lim_{n\rightarrow\infty} \sup_{A \in \B(\mathds{Y})} \bigg|\int_{\mathds{X}} Q(A|x_1)\mathcal{T}(dx_1|x_0^n,u_n) -\int_{\mathds{X}} Q(A|x_1)\mathcal{T}(dx_1|x_0,u) \bigg|\\
&\leq \lim_{n\rightarrow\infty} \|\mathcal{T}(dx_1|x_0^n,u_n)-\mathcal{T}(dx_1|x_0,u)\|_{TV} = 0
\intertext{and}
&\lim_{n\rightarrow\infty} \sup_{A \in \B(\mathds{Y})} |f_{A}(x_0^n) - f_{A}(x_0)| \\
&=\lim_{n\rightarrow\infty} \sup_{A \in \B(\mathds{Y})} \bigg|\int_{\mathds{X}} Q(A|x_1)\mathcal{T}(dx_1|x_0^n,u) -\int_{\mathds{X}} Q(A|x_1)\mathcal{T}(dx_1|x_0,u) \bigg|\\
&\leq \lim_{n\rightarrow\infty} \|\mathcal{T}(dx_1|x_0^n,u)-\mathcal{T}(dx_1|x_0,u)\|_{TV} = 0.
\end{align*}
Then, by Lemma \ref{kernel_robust}, we have
\begin{align*}
&\lim_{n\rightarrow\infty} \sup_{A \in \B(\mathds{Y})} \bigg| \int_{\mathds{X}} f_{n,A}(x_0) z_0^n(dx_0)- \int_{\mathds{X}} f_{A}(x_0) z_0(dx_0) \bigg| \\
&=\lim_{n\rightarrow\infty} \sup_{A \in \B(\mathds{Y})} \bigg| \int_{\mathds{X}} \int_{\mathds{X}} Q(A|x_1)\mathcal{T}(dx_1|x_0,u_n)z_0^n(dx_0)\\
&\phantom{xxxxxxxxxxxxxx} - \int_{\mathds{X}} \int_{\mathds{X}} Q(A|x_1)\mathcal{T}(dx_1|x_0,u)z_0(dx_0) \bigg| \\
&= 0.
\end{align*}
Hence, $P(dy_1|z_0,u_0)$ is continuous in total variation.

Now, we show (ii); that is, for any $(z_0^n,u_n) \rightarrow (z_0,u)$,  we have
\begin{align}
\lim_{n\rightarrow\infty} \int_{\mathds{Y}} \rho(z_1(z_0^n,u_n,y_1),z_1(z_0,u,y_1))P(dy_1|z_0,u) = 0. \nonumber
\end{align}
From the proof of Theorem~\ref{TV_channel_thm}, it suffices to show that
\begin{align}
&\lim_{n\rightarrow\infty} \int_{\mathds{X}} \int_{I_+^{(n)}} f_m(x_1) Q(dy_1|x_1)\mathcal{T}(dx_1|z_0^n,u_n) \nonumber\\
&\phantom{xxxxxxxxxxx} -\int_{\mathds{X}} \int_{I_+^{(n)}} f_m(x_1) Q(dy_1|x_1) \mathcal{T}(dx_1|z_0,u) = 0. \label{imp}
\end{align}
Indeed, we have
\begin{align*}
&\biggl| \int_{\mathds{X}} \int_{I_+^{(n)}} f_m(x_1)Q(dy_1|x_1)\mathcal{T}(dx_1|z_0^n,u_n) -\int_{\mathds{X}} \int_{I_+^{(n)}} f_m(x_1)Q(dy_1|x_1)\mathcal{T}(dx_1|z_0,u) \biggr|\\
&\leq \biggl| \int_{\mathds{X}^2} f_m(x_1) Q(I_+^{(n)}|x_1) \mathcal{T}(dx_1|x_0,u_n) z_0^n(dx_0)\\
&\phantom{xxxxxxxxxx} - \int_{\mathds{X}^2} f_m(x_1) Q(I_+^{(n)}|x_1) \mathcal{T}(dx_1|x_0,u) z_0^n(dx_0) \biggr| \\
&\qquad+ \biggl| \int_{\mathds{X}^2} f_m(x_1) Q(I_+^{(n)}|x_1) \mathcal{T}(dx_1|x_0,u) z_0^n(dx_0) \\
&\phantom{xxxxxxxxx}- \int_{\mathds{X}^2} f_m(x_1) Q(I_+^{(n)}|x_1) \mathcal{T}(dx_1|x_0,u) z_0(dx_0) \biggr| \\
&\leq \int_{\mathds{X}} \|\mathcal{T}(dx_1|x_0,u_n)-\mathcal{T}(dx_1|x_0,u)\|_{TV} z_0^n(dx_0) \\
&\qquad+\biggl| \int_{\mathds{X}^2} f_m(x_1) Q(I_+^{(n)}|x_1) \mathcal{T}(dx_1|x_0,u) z_0^n(dx_0)\\
&\phantom{xxxxxxxxxxx}- \int_{\mathds{X}^2} f_m(x_1) Q(I_+^{(n)}|x_1) \mathcal{T}(dx_1|x_0,u) z_0(dx_0) \biggr|,
\end{align*}
where we have used $\sup_{n\geq1} \sup_{x_1 \in \mathds{X}} \big|f_m(x_1) Q(I_+^{(n)}|x_1)\big| \leq 1$ in the last inequality. If we define $r_n(x_0) = \|\mathcal{T}(dx_1|x_0,u_n)-\mathcal{T}(dx_1|x_0,u)\|_{TV}$, then $r_n(x_0^n) \rightarrow 0$ whenever $x_0^n \rightarrow x_0$. Then, the first term converges to $0$ by Lemma~\ref{kernel_robust}
as $z_0^n \rightarrow z_0$ weakly. The second term also converges to $0$ by Lemma~\ref{kernel_robust}, since $\{\int_{\mathds{X}} f(x_1) Q(I_+^{(n)}|x_1) \mathcal{T}(dx_1|\cdot,u): n\geq1\}$ is a family of uniformly bounded and equicontinuous functions by total variation continuity of $\mathcal{T}(dx_1|x_0,u)$. This proves (ii) and completes the proof together with (i).

\section{A Technical Generalization}\label{comp}

In this section, we prove the weak Feller property of the filter process under more general condition than those in Theorem~\ref{TV_channel_thm} and Theorem~\ref{TV_kernel_thm}.
But, we note that it is indeed generally infeasible to establish this condition without imposing assumptions similar to the assumptions in Theorem~\ref{TV_channel_thm} and Theorem~\ref{TV_kernel_thm}. Therefore, although this condition is more general than those in Theorem~\ref{TV_channel_thm} and Theorem~\ref{TV_kernel_thm}, this generalization is not excessively important in practice.

We first note that our proof technique brings to light the main ingredients that is necessary to prove the weak Feller property of the filter process via the item (i) and eq. (\ref{imp}); that is,
\begin{itemize}
\item $P(dy_1|z_0,u_0)$ is continuous in total variation,
\item $\displaystyle
\lim_{n\rightarrow\infty} \int_{\mathds{X}} f_m(x_1) Q(I_+^{(n)}|x_1)\mathcal{T}(dx_1|z_0^n,u_n)
-\int_{\mathds{X}} f_m(x_1) Q(I_+^{(n)}|x_1) \mathcal{T}(dx_1|z_0,u) = 0.
$
\end{itemize}
This observation suggests the following condition that generalize the conditions in our previously stated main results. Let $\mathds{F} = \{f_m\}_{m\geq1} \subset C_b(\mathds{X})$ be a countable set of continuous and bounded functions such that $\|f_m\|_{\infty} \leq 1$ for all $m\geq1$, $1_{\mathds{X}} \in \mathds{F}$, and $\mathds{F}$ metrizes the weak topology on $\mathcal{P}(\mathds{X})$ via the metric $\rho$ introduced in Section~\ref{problem}. Then, we state the following assumption:
\begin{itemize}
\item[(M)] For each $f \in \mathds{F}$, the family of functions
\begin{align}
(z_0,u_0) \mapsto \int_{\mathds{X}} f(x_1) Q(A|x_1,u_0) \mathcal{T}(dx_1|z_0,u_0) \nonumber
\end{align}
is equicontinuous when indexed by $A \in \B(\mathds{Y})$.
\end{itemize}
Using Lemma~\ref{kernel_robust}, it is fairly straightforward to prove that conditions in Theorem~\ref{TV_channel_thm} and Theorem~\ref{TV_kernel_thm} both imply the assumption (M). Hence, assumption (M) is more general than those in Theorem~\ref{TV_channel_thm} and Theorem~\ref{TV_kernel_thm}.

\begin{thm}\label{ext_thm}
Under assumption (M), the transition probability $\eta(\cdot|z,u)$ of the filter process is weakly continuous in $(z,u)$.
\end{thm}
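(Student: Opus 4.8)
The plan is to reduce the claim to the two sufficient conditions isolated in the common analysis that precedes the proof of Theorem~\ref{TV_channel_thm}, and then to discharge both of them using assumption (M) directly, without any appeal to total variation continuity of $Q$ or of $\mathcal{T}$. Recall that by that reduction it suffices to show: (i) $P(dy_1|z_0,u_0)$ is continuous in total variation; and (ii) $\int_{\mathds{Y}} \rho(z_1(z_0^n,u_n,y_1),z_1(z_0,u,y_1))P(dy_1|z_0,u) \to 0$ whenever $(z_0^n,u_n)\to(z_0,u)$.

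For (i), I would take $f = 1_{\mathds{X}}$ in assumption (M), which is legitimate since $1_{\mathds{X}} \in \mathds{F}$ by hypothesis. Then the family
\[
(z_0,u_0) \mapsto \int_{\mathds{X}} Q(A|x_1,u_0)\mathcal{T}(dx_1|z_0,u_0) = P(A|z_0,u_0)
\]
is equicontinuous in $(z_0,u_0)$ when indexed by $A \in \B(\mathds{Y})$. Along any $(z_0^n,u_n)\to(z_0,u)$, equicontinuity yields $\sup_{A\in\B(\mathds{Y})} |P(A|z_0^n,u_n) - P(A|z_0,u)| \to 0$, which is exactly total variation continuity of $P$; this gives (i).

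For (ii), I would follow verbatim the decomposition used in the proof of Theorem~\ref{TV_channel_thm}: expand $\rho$ as the series $\sum_m 2^{-m+1}(\cdots)$, split $\mathds{Y}$ into the sets $I_+^{(n)}$ and $I_-^{(n)}$ of (\ref{I_+}), and bound the $I_+^{(n)}$ contribution by $\|P(\cdot|z_0,u)-P(\cdot|z_0^n,u_n)\|_{TV}$ plus the term
\begin{align*}
\int_{\mathds{X}} f_m(x_1) Q(I_+^{(n)}|x_1,u_n)\mathcal{T}(dx_1|z_0^n,u_n) - \int_{\mathds{X}} f_m(x_1) Q(I_+^{(n)}|x_1,u)\mathcal{T}(dx_1|z_0,u),
\end{align*}
with the $I_-^{(n)}$ contribution handled identically. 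The total variation term vanishes by (i). For the remaining term I would invoke (M) with $f = f_m$: the family indexed by $A \in \B(\mathds{Y})$ is equicontinuous at $(z_0,u)$, so substituting $A = I_+^{(n)}$ the difference tends to $0$. A dominated convergence argument across the series in $m$ then closes (ii), and together with (i) completes the proof.

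The main subtlety, and the reason a merely pointwise-in-$A$ continuity of $P$ would be insufficient, is that the set $I_+^{(n)}$ depends on $n$ and is not given explicitly. It is precisely the uniformity over all $A \in \B(\mathds{Y})$ encoded in the equicontinuity of (M) that permits substituting the moving set $A = I_+^{(n)}$ while still concluding convergence. This is the hinge of the argument; everything else is the bookkeeping already carried out in the proof of Theorem~\ref{TV_channel_thm}, which is why the two earlier main results both fall out of (M) once the key term is recognized as an instance of the family in (M).
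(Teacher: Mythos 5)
Your proposal is correct and follows essentially the same route as the paper's proof: reduce to conditions (i) and (ii), obtain (i) from (M) with $f=1_{\mathds{X}}$, and obtain (ii) by reusing the $I_{+}^{(n)}$/$I_{-}^{(n)}$ decomposition from the proof of Theorem~\ref{TV_channel_thm} and applying (M) with $f=f_m$ to the key term in (\ref{imp}). Your added remark that the uniformity over $A\in\B(\mathds{Y})$ in (M) is exactly what licenses substituting the $n$-dependent set $I_{+}^{(n)}$ is a correct and worthwhile observation, though the paper leaves it implicit.
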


\begin{proof}
Recall that it is sufficient to prove the following:
\begin{itemize}
\item[(i)] $P(dy_1|z_0,u_0)$ is continuous in total variation,
\item[(ii)] $\lim_{n\rightarrow\infty} \int_{\mathds{Y}} \rho(z_1(z_0^n,u_n,y_1),z_1(z_0,u,y_1))P(dy_1|z_0,u) = 0$ as $(z_0^n,u_n) \rightarrow (z_0,u)$.
\end{itemize}

Firstly, (i) is true since $1_{\mathds{X}} \in \mathds{F}$. For (ii), it suffices to show that
\begin{align*}
&\lim_{n\rightarrow\infty} \int_{\mathds{X}} f_m(x_1) Q(I_+^{(n)}|x_1)\mathcal{T}(dx_1|z_0^n,u_n) -\int_{\mathds{X}} f_m(x_1) Q(I_+^{(n)}|x_1) \mathcal{T}(dx_1|z_0,u) = 0.
\end{align*}
But this immediately follows from assumption (M).
\end{proof}

A careful look at the proof of the weak Feller property of the filter process in Feinberg et. al. \cite{FeKaZg14} reveals that they have first established the weak Feller property under a condition somewhat similar to the assumption (M), and then, establish Theorem~\ref{TV_channel_thm} by proving that assumptions in Theorem~\ref{TV_channel_thm} imply this more general condition. Indeed, let $\tau_b = \{O_j\} \subset \mathds{X}$ be a countable base for the topology on $\mathds{X}$ such that $\mathds{X} \in \tau_b$. Then, under the following assumption:
\begin{itemize}
\item[(F)] For each finite intersection $O = \bigcap_{n=1}^N O_{j_n}$, where $O_{j_n} \in \tau_b$, the family of functions
\begin{align}
\hspace{-20pt} (z_0,u_0) \mapsto \int_{\mathds{X}} \int_{\mathds{X}} 1_O(x_1) Q(A|x_1,u_0) \mathcal{T}(dx_1|x_0,u_0) z_0(dx_0) \nonumber
\end{align}
is equicontinuous when indexed by $A \in \B(\mathds{Y})$,
\end{itemize}
they have proved that the weak Feller property of the filter process holds (see \cite[Lemma 5.3]{FeKaZg14} and \cite[Theorem 5.5]{FeKaZg14-(b)}). We observe that (F) is very similar to (M) except that, in (F), Feinberg et. al. use open sets instead of continuous and bounded functions. However, proving that conditions in Theorem~\ref{TV_channel_thm} imply the assumption (F) as in \cite{FeKaZg14} requires quite tedious mathematical methods.  By using open sets instead
of continuous and bounded functions, one needs to work
with inequalities and limit infimum operation as a result of
Portmanteau theorem [4, Theorem 2.1] (and the associated proof program involving generalized Fatou's lemma \cite{FeKaLi18,FeKaZg_jomaa_16}), in place of equalities
and limit operation, which are significantly easier to analyze than the former leading to a much more concise analysis that we have presented in this paper. For instance, \cite[Theorem 5.1]{FeKaZg14} is the key result to prove that the weak Feller condition of the transition probability and the total variation continuity of the observation channel imply the assumption (F).
We note that if one states this result using continuous and bounded functions in place of open sets, then this version of \cite[Theorem 5.1]{FeKaZg14} becomes a corollary of Lemma~\ref{kernel_robust}, which has a concise and easy to follow proof. But, the proof of \cite[Theorem 5.1]{FeKaZg14} with open sets requires quite tedious mathematical concepts from topology and weak convergence of probability measures. In view of this discussion, we also note that Theorem~\ref{TV_kernel_thm} can also be proved using the condition (F) rather than our approach building on (M) through some additional argumentation. 

In summary, our approach allows for a more direct and concise approach which also makes the proof of Theorem~\ref{TV_channel_thm} more accessible. Once again, we note that Theorem~\ref{TV_kernel_thm} has not been reported in the literature.

\section{Conclusion}

In this paper, there are two main contributions: (i) the weak Feller property of the filter process is established under a new condition, which assumes that the state transition probability is continuous under the total variation convergence with no assumptions on the measurement model, and (ii) a concise and easy to follow proof of the same result under the weak Feller condition of the transition probability and the total variation continuity of the observation channel, which was first established in \cite{FeKaZg14}, is also given. Implications of these results have also been presented.

\section{Acknowledgements}
The authors are grateful to Prof. Eugene Feinberg for generously providing technical feedback and pointing out to related results. 
\section*{Appendix}

\subsection{Proof of Lemma \ref{kernel_robust}}\label{app2}

Note that since, for any $x_n \to x$ in $\mathds{X}$, we have
\begin{align}\label{f2}
&\lim_{n\rightarrow\infty}\sup_{\lambda \in \Lambda}|f_{\lambda}(x_n)-f_{\lambda}(x)| = 0,
\end{align}
we see that $\{f_\lambda\}_{\lambda \in \Lambda}$ is an equicontinuous family of functions. Thus, by the Arzela-Ascoli Theorem \cite{Dud89}, for any given compact set $K \subset \mathds{X}$ and $\epsilon>0$, there is a finite set of continuous and bounded functions $\mathds{F}:=\{f_1,\dots,f_N\}$, so that, for any $\lambda \in \Lambda$, there is $f_i \in \mathds{F}$ with
\[\sup_{x \in K}|f_\lambda(x)-f_i(x) |\leq \epsilon.\]
Now, we claim that, for the same $f_i \in \mathds{F}$, we have $\sup_{x\in K} |f_{n,\lambda}(x)-f_i(x)|\leq 3\epsilon/2$ for large enough $n$, which is independent of $\lambda$. To see this, observe the following:
\begin{align*}
&\sup_{x\in K} |f_{n,\lambda}(x)-f_i(x)|\leq \sup_{x\in K} |f_{n,\lambda}(x)-f_\lambda(x)| +\sup_{x\in K} |f_\lambda(x)-f_i(x)|.
\end{align*}
Note that the second term is less than $\epsilon$ and the first term can be made arbitrarily small as $f_{n,\lambda} \to f_\lambda$ uniformly on compact sets and on $\Lambda$, which can be easily proved using the assumptions in the lemma.

Note that $\mu_n \to \mu$ weakly. Hence, $\{\mu_n\}$ is a tight family of probability measures by Prokhorov theorem \cite[Theorem 5.2]{Billingsley}. Therefore, for any $\epsilon >0$, there exists a compact subset $K_\epsilon$ of $\mathds{X}$ such that, for all $n$,
\[ \mu_n(K_\epsilon) \geq 1-\epsilon.\]

Now, we fix any $\epsilon>0$ and choose a compact set $K_\epsilon$  such that, for all $n$, $\mu_n(K_\epsilon) \geq 1-\epsilon$. We also fix a finite family of continuous and bounded functions $\mathds{F}:=\{f_1,\dots,f_{N}\}$ such that, for any $\lambda$, we can find $f_i \in \mathds{F}$ with $\sup_{x \in K_\epsilon}|f_\lambda(x)-f_i(x) |\leq \epsilon$. Moreover, we choose a large $N$ such that $\sup_{x\in K_\epsilon} |f_{n,\gamma}(x)-f_i(x)|\leq 3\epsilon/2$ for all $n\geq N$.

With this setup, we go back to the main statement:

\begin{align*}
&\sup_{\lambda \in \Lambda} \bigg|\int f_{n,\lambda}(x) \mu_n(dx) - \int f_{\lambda}(x)\mu(dx)\bigg|\\
&\leq\sup_{\lambda \in \Lambda}\bigg|\int_{\mathds{X}\setminus K_\epsilon}f_{n,\lambda}(x) \mu_n(dx) - \int_{\mathds{X}\setminus K_\epsilon}  f_{\lambda}(x)\mu(dx)\bigg|\\
&\phantom{xxxxxxx}+\sup_{\lambda \in \Lambda}\bigg|\int_{ K_\epsilon} f_{n,\lambda}(x) \mu_n(dx)- \int_{ K_\epsilon}  f_{\lambda}(x)\mu(dx)\bigg|\\
&\leq 2\epsilon C + \sup_{\lambda \in \Lambda}\bigg|\int_{K_\epsilon} \big(f_{n,\lambda}(x) -f_i(x)\big) \mu_n(dx) + \int_{K_\epsilon} f_i(x) \mu_n(dx) - \int_{K_\epsilon} f_i(x) \mu(dx)\\
&\qquad\qquad\qquad\qquad + \int_{K_\epsilon} \big(f_i(x)-f_\lambda(x)\big) \mu(dx) \bigg|\\
&\leq 2\epsilon C +  \bigg| \int_{K_\epsilon} f_i(x) \mu_n(dx) - \int_{K_\epsilon} f_i(x) \mu(dx) \bigg| + 5\epsilon/2\\
&\leq 2\epsilon C +  \bigg| \int_{\mathds{X}} f_i(x) \mu_n(dx) - \int_{\mathds{X}} f_i(x) \mu(dx) \bigg|\\
&\phantom{xxxxxxxx}+ \bigg| \int_{K_\epsilon^c} f_i(x) \mu(dx) - \int_{K_\epsilon^c} f_i(x) \mu_n(dx) \bigg| + 5\epsilon/2\\
&\leq 4\epsilon C + 5\epsilon /2 + \bigg| \int_{\mathds{X}} f_i(x) \mu_n(dx) - \int_{\mathds{X}} f_i(x) \mu(dx) \bigg|
\end{align*}
where $C$ is the uniform bound on $\{f_{n,\lambda}\}$ and $\{f_{\lambda}\}$. Since $\epsilon$ is arbitrary and $\mu_n$ converges weakly to $\mu$, the result follows.

\end{document}